\newcommand{\pleinepage}%
{\setlength{\oddsidemargin}{0in}\setlength{\textwidth}{6.26in}\setlength{\topmargin}{0in}\setlength{\textheight}{8.7in}}
\newcommand{\grossepage}%
{\setlength{\oddsidemargin}{-0.5cm}\setlength{\textwidth}{17.5cm}\setlength{\topmargin}{-1.5cm}\setlength{\textheight}{24cm}}
\newcommand{\defit}[1]%
{{\em #1}}
\def\Re{\mathop{\plainRe\mkern -2mu\mit e}\nolimits}
\def\Im{\mathop{\plainIm\mkern -2mu\mit m}\nolimits}
\def\surl#1_#2{\mathrel{\mathop{\kern 0pt #1}\limits_{#2}}}
\newcommand{\fleche}[1]%
{\rTo^{#1}}
\newcommand{\fonction}[5]%
{\begin{diagram}
#2 & {} &\rTo^{#1} & {} & #3 \\
#4 & {} &\rMapsto & {} & #5 
\end{diagram} }
\newcommand{\sfonction}[5]%
{$\begin{array}{ccc}#2 & {\buildrel #1 \over \rightarrow} & #3 \\#4 & \mapsto & #5 \\ \end{array}$ }
\newcommand{\accolade}[1]%
{\begin{cases}  #1 \end{cases}}
\newcounter{nbre}
\newcommand{\entete}[6]%
{{\large \noindent%
\mbox{\begin{tabular}{c} #1 \\ #2 \end{tabular}}\hspace{\fill}\mbox{\begin{tabular}{c} #3 \\ #4 \end{tabular}}\vspace{1cm}\begin{center}{\Huge \textsc{#5}} \\ \vspace{0.5cm}\begin{tabular}{c} #6 \\ \hline \end{tabular}\end{center}\bigskip}}
\newcommand{\defifont}{ \sc }
\par \vspace{0.3cm}\ \\ \noindent {{  \textsc{\textbf{Definitions}}} 
\sffamily\renewcommand{\em}{\normalfont\itshape}}{\par
\newcommand{\gloss}[1]%
 {\index{{#1}@{#1}}{\em #1}\relax}
\newcommand{\xgloss}[2]%
 {\index{{#1}@{#1}!{#2}@{#2}}{\em #1\relax #2}\relax}
 \newcommand{\glossref}[2]%
 {\index{{#2}@{#1}}{\em #1}\relax}
 \newcommand{\xglossref}[4]%
 {\index{{#3}@{#1}!{#4}@{#2}}{\em #1 \relax #2 }\relax}
\newcounter{compteur}
\renewcommand{\thecompteur}{\thesection.\arabic{compteur}}
\newenvironment{dfn}[1][]%
{\refstepcounter{compteur} \par \vspace{0.3cm}\ \\ \noindent {{  \textsc{\textbf{Definition}}} 
    \textbf{\thecompteur} \ 
    }---\ \sffamily\renewcommand{\em}{\normalfont\itshape}}{\par
    \vspace{0.3 cm}}
{\refstepcounter{compteur} \par \vspace{0.3cm}\ \\ \noindent {{  \textsc{\textbf{Definitions}}} 
    \thecompteur \ 
    }---\ \sffamily\renewcommand{\em}{\normalfont\itshape}\begin{enumerate}}{\end{enumerate}
    \par \vspace{0.5 cm}}
\newcounter{theonum}\setcounter{theonum}{0}
\newenvironment{thm}[1][]%
{\refstepcounter{theonum} \par \vspace{0.3cm}\ \\ \noindent {{  \textsc{\textbf{Theorem}}} 
    \textbf{\thecompteur} \ 
    }---\ \sffamily\renewcommand{\em}{\normalfont\itshape}}{\par
    \vspace{0.3 cm}}
\newcommand\addpage[2]{#2, page #1}
\renewcommand\p@theonum{\protect\addpage{\thepage}}
\par \vspace{0.2cm}\ \\ \noindent {{  \textsc{\textbf{Proposition}}} 
\sffamily\renewcommand{\em}{\normalfont\itshape}}{\par
\newenvironment{prop*}[1][]%
{ \par \vspace{0.2cm}\ \\ \noindent {{\textsc{\textbf{Proposition}}} 
    #1 \ 
    }---\sffamily\renewcommand{\em}{\normalfont\itshape}}{\par \vspace{0.3 cm}}
{\refstepcounter{compteur} \par \vspace{0.3cm}\ \\ \noindent {{  \textsc{\textbf{Properties}}} 
    \thecompteur \ 
    }---\ \sffamily\renewcommand{\em}{\normalfont\itshape}}{\par
    \vspace{0.3 cm}}
\par \vspace{0.3cm}\ \\ \noindent {{ \textsc{\textbf{Corollary}}}
\sffamily\renewcommand{\em}{\normalfont\itshape}}{\par  \vspace{0.3 cm}}
\par \vspace{0.2cm}\ \\ \noindent {{  \textsc{\textbf{Lemma}}} 
\sffamily\renewcommand{\em}{\normalfont\itshape}}{\par
\renewenvironment{proof}%
{\par \vspace{0.2cm}\ \\ \noindent{ { \textsc{Proof}}\,---\ } }{\hfill{$\Box$} \par \vspace{0.2 cm}}
\par \vspace{0.2cm}\ \\ \noindent{\sc \textbf{Remark}\ }---\ }{\par \vspace{0.2cm}}
\par \vspace{0.2cm}\ \\ \noindent{\sc \textbf{Conjecture} \textbf{\thecompteur}\ }---\ }{\par \vspace{0.2cm}}
\begin{document}

\NoCompileMatrices
\def\ds{\displaystyle}
\def\pn{\pi_{n}}
\def\pnu{\pi_{n-1}}
\pagestyle{fancy}
\renewcommand{\sectionmark}[1]{\markright{\thesection\ #1}}
\fancyhf{} 


\renewcommand{\headrulewidth}{0.16pt}
\renewcommand{\footrulewidth}{0pt}
\addtolength{\headheight}{0.7pt} 
\renewcommand{\headrulewidth}{0pt} 

\newcommand{\foot}[1]{\footnote{\begin{normalsize}#1\end{normalsize}}}


\def\bX{\partial X}
\def\dim{\mathop{\rm dim}}
\def\Re{\mathop{\rm Re}}
\def\Im{\mathop{\rm Im}}
\def\I{\mathop{\rm I}}
\def\Id{\mathop{\rm Id}}
\def\grad{\mathop{\rm grad}}
\def\vol{\mathop{\rm vol}}
\def\SU{\mathop{\rm SU}}
\def\SO{\mathop{\rm SO}}
\def\Aut{\mathop{\rm Aut}}
\def\End{\mathop{\rm End}}
\def\GL{\mathop{\rm GL}}
\def\Cinf{\mathop{\mathcal C^{\infty}}}
\def\Ker{\mathop{\rm Ker}}
\def\Coker{\mathop{\rm Coker}}
\def\dom{\mathop{\rm Dom}}
\def\Hom{\mathop{\rm Hom}}
\def\Ch{\mathop{\rm Ch}}
\def\sign{\mathop{\rm sign}}
\def\SF{\mathop{\rm SF}}
\def\loc{\mathop{\rm loc}}
\def\AS{\mathop{\rm AS}}
\def\spec{\mathop{\rm spec}}
\def\Ric{\mathop{\rm Ric}}
\def\ch{\mathop{\rm ch}}
\def\Ch{\mathop{\rm Ch}}

\def\ev{\mathop{\rm ev}}
\def\id\textrm{Id}
\def\dd{\mathcal{D}(d)}
\def\Cli{\mathbb{C}l(1)}
\def\kerd{\operatorname{ker}(d)}

\def\Fi{\Phi}

\def\de{\delta}
\def \dl{\partial L_x^0}
\def\e{\eta}
\def\ep{\epsilon}
\def\ro{\rho}
\def\a{\alpha}
\def\o{\omega}
\def\O{\Omega}
\def\b{\beta}
\def\la{\lambda}
\def\th{\theta}
\def\s{\sigma}
\def\t{\tau}
\def\g{\gamma}
\def\D{\Delta}
\def\G{\Gamma}
\def \fol{\mathcal F}
\def\R{\mathbin{\mathbb R}}
\def\Rn{\R^{n}}
\def\C{\mathbb{C}}
\def\Cm{\mathbb{C}^{m}}
\def\Cn{\mathbb{C}^{n}}
\def\gr{\mathcal{G}}
\def\Kahler{{K\"ahler}}
\def\w{{\mathchoice{\,{\scriptstyle\wedge}\,}{{\scriptstyle\wedge}}
{{\scriptscriptstyle\wedge}}{{\scriptscriptstyle\wedge}}}}
\def\cA{{\cal A}}\def\cL{{\cal L}}
\def\cO{{\cal O}}\def\cT{{\cal T}}\def\cU{{\cal U}}
\def\cD{{\cal D}}\def\cF{{\cal F}}\def\cP{{\cal P}}\def\cH{{\cal H}}\def\cL{{\cal L}}
\def\cB{{\cal B}}


\newcommand{\n}[1]{\left\| #1\right\|}

\def\Z{\mathbb{Z}}
\def\cgs{C^{*}(\Gamma,\sigma)}
\def\bcgs{C^{*}(\Gamma,\bar{\sigma})}
\def\cgsr{C^{*}_{red}(,\sigma)}
\def\Mt{\tilde{M}}
\def\Et{\tilde{E}}
\def\Vt{\tilde{V}}
\def\Xt{\tilde{X}}
\def\N{\mathbb{N}}
\def\Nbs{\N^{\bar{\s}}}
\def\rcab{\ro^{[c]}_{\a-\b}}
\def\rc{\ro^{[c]}}
\def\Cd{\mathbb{C}^{d}}
\def\tr{\mathop{\rm tr}}\def\tralg{\tr{}^{\text{alg}}}       

\def\TR{\mathop{\rm TR}}\def\trace{\mathop{\rm trace}}
\def\STR{\mathop{\rm STR}}
\def\trG{\mathop{\rm tr_\Gamma}}
\def\TRG{\mathop{\rm TR_\Gamma}}
\def\Tr{\mathop{\rm Tr}}
\def\Str{\mathop{\rm Str}}
\def\Cl{\mathop{\rm Cl}}
\def\Op{\mathop{\rm Op}}
\def\supp{\mathop{\rm supp}}
\def\scal{\mathop{\rm scal}}
\def\ind{\mathop{\rm ind}}
\def\Ind{\mathop{\mathcal I\rm nd}\,}
\def\Diff{\mathop{\rm Diff}}
\def\T{\mathcal{T}}
\def\dn{\textrm{dim}_{\Lambda}}
\def \lke{\textrm L^2-\textrm{Ker}}


\newcommand{\wt}{\widetilde}
\newcommand{\go}{\mathcal{G}^{0}}
\newcommand{\dii}{(d_x^{k-1})^\ast}
\newcommand{\di}{d_x^{k-1}}
\newcommand{\ra}{\operatorname{range}}
\newcommand{\rb}{\rangle}
\newcommand{\lb}{\langle}
\newcommand{\re}{\mathcal{R}}
\newcommand{\vo}{\operatorname{End}_{\Lambda}(E)}
\newcommand{\mt}{\mu_{\Lambda,T}}
\newcommand{\tru}{\operatorname{tr}_{\Lambda}}
\newcommand{\buno}{B^1_{\Lambda}(E)}
\newcommand{\bdue}{B^2_{\Lambda}(E)}
\newcommand{\clis}{H^{2k}_{(2),dR}(L_x^0)}
\newcommand{\cali}{L^2(\Omega^{2k}(\partial L_x^0))}
\newcommand{\binf}{B^{\infty}_{\Lambda}(E)}
\newcommand{\bif}{B^{f}_{\Lambda}(E)}
\newcommand{\vn}{\operatorname{End}_{\mathcal{R}}}
\newcommand{\ho}{\operatorname{Hom}_{\Lambda}}
\newcommand{\spc}{\operatorname{spec}_{\Lambda,e}}
\newcommand{\ix}{\operatorname{Ind}_{\Lambda}}
\newcommand{\cic}{C^{\infty}_c(L_x;E_{|L_x})}
\newcommand{\ci}{C^{\infty}_c(L_x;E_{|L_x})}
\newcommand{\tx}{\{T_x\}_{x\in X}}
\newcommand{\cc}{C^{\infty}_c(X)}
\newcommand{\rom}{\underline{\mathcal{R}_0}}
\newcommand{\roma}{(\mathcal{R}_0)_{|\partial X_0}     }
\newcommand{\dfo}{D^{\mathcal{F}_{\partial}}}
\newcommand{\deu}{D_{\epsilon,u}}
\newcommand{\deupp}{D^{+}_{\epsilon,u}}
\newcommand{\deum}{D^{-}_{\epsilon,u}}
\newcommand{\deuf}{D_{\epsilon,u}^{\mathcal{F}_{\partial}}}
\newcommand{\deufo}{D_{\epsilon,u,x_0}^{\mathcal{F}_{\partial}}}
\newcommand{\pie}{\Pi_{\epsilon}}
\newcommand{\pal}{\partial L_x}
\newcommand{\pr}{\partial_r}
\newcommand{\inbl}{\int_{\partial L_x} }
\newcommand{\pkp}{\chi_{\{0\}}(D^+_x)}
\newcommand{\deup}{D_{\epsilon,x}^{\pm}}
\newcommand{\dext}{D_{\epsilon,\mp u,x}^{\pm}}
\newcommand{\dex}{D_{\epsilon,\pm u,x}^{\pm}}
\newcommand{\ext}{\operatorname{Ext}(D_{\epsilon,x}^{\pm})}
\newcommand{\eppu}{0<|u|<\epsilon}
\newcommand{\hdeupx}{e^{-tD^2_{\epsilon,u,x}}}
\newcommand{\udif}{\operatorname{UDiff}}
\newcommand{\ki}{L^2(\Omega^kL_x^0)}
\newcommand{\uc}{\operatorname{UC}}
\newcommand{\op}{\operatorname{Op}}
\newcommand{\deux}{D_{\epsilon,u,x}}
\newcommand{\pk}{\phi_k}
\newcommand{\hdeupsx}{e^{-sD^2_{\epsilon,u,x}}}
\newcommand{\hdeups}{e^{-sD^2_{\epsilon,u}}}
\newcommand{\hdeut}{e^{-tD^2_{\epsilon,u}}}
\newcommand{\indu}{\operatorname{ind}_{\Lambda}}
\newcommand{\stru}{\operatorname{str}_{\Lambda}}
\newcommand{\deuq}{D^2_{\epsilon,u}}
\newcommand{\intk}{\int_{\sqrt{k}}^{\infty}}
\newcommand{\dmd}{d\mu_{\Lambda,D_{\epsilon,u}}(x)}
\newcommand{\defox}{D_{x}^{\mathcal{F}_{\partial}}}
\newcommand{\mun}{\mu_{\Lambda,D_{\epsilon,u}}(x)}
\newcommand{\tsi}{\int_{-\sigma}^{\sigma}}
\newcommand{\ak}{\lim_{k\rightarrow \infty}\operatorname{LIM}_{s\rightarrow 0}}
\newcommand{\deus}{D_{\epsilon,u}e^{-tD_{\epsilon,u}^2}}

\newcommand{\deuss}{D_{\epsilon,u}^2e^{-tD_{\epsilon,u}^2}}
\newcommand{\pkd}{\phi_k^2}
\newcommand{\eup}{e^{-tD^{+}_{\epsilon,u}D^{-}_{\epsilon,u}}}
\newcommand{\eum}{e^{-tD^{-}_{\epsilon,u} D^{+}_{\epsilon,u} }}
\newcommand{\deussx}{D_{\epsilon,u,x}e^{-tD_{\epsilon,u,x}^2}}
\newcommand{\dessx}{S_{\epsilon,u,x}e^{-tS_{\epsilon,u,x}^2}}
\newcommand{\clib}{c(\partial_r)\partial_r \phi_k^2}
\newcommand{\sk}{\int_s^{\sqrt{k}}}
\newcommand{\esm}{S_{\epsilon,u}e^{-tS_{\epsilon,u}^2}}
\newcommand{\deussxo}{D_{\epsilon,u,z_0}e^{-tD_{\epsilon,u,x_0}^2}}
\newcommand{\dessxo}{S_{\epsilon,u,z_0}e^{-tS_{\epsilon,u,z_0}^2}}
\newcommand{\deusszo}{D^{\mathcal{F}_{\partial}}_{\epsilon,u,z_0}e^{-t(D^{\mathcal{F}_{\partial}}_{\epsilon,u,x_0})2}}
\newcommand{\desszo}{S_{\epsilon,u,x_0}e^{-tS_{\epsilon,u,x_0}^2}}
\newcommand{\essp}{S_{\epsilon,u}^2}
\newcommand{\esspo}{S_{0,u}^2}
\newcommand{\dotto}{\dot{\theta}}
\newcommand{\piep}{\Pi_{\epsilon}}
\newcommand{\ome}{\Omega}
\newcommand{\deffo}{D^{\mathcal{F}_{\partial}}}
\newcommand{\nablal}{\nabla_x^l}
\newcommand{\nablak}{\nabla_y^k}
\newcommand{\kerk}{[f(P)]_{(x_0,\bullet)} }
\newcommand{\kepp}{\operatorname{Ker} (D^{\mathcal{F}_0^+})}
\newcommand{\ty}{\infty}
\definecolor{light}{gray}{.95}
\newcommand{\pecetta}[1]{
$\phantom .$
\bigskip
\par\noindent
\colorbox{light}{\begin{minipage}{13.5 cm}#1\end{minipage}}
\bigskip
\par\noindent
}

\newcommand\Di{D\kern-7pt/}

\title{Complex Lie Algebroids and ACH manifolds}
\author{\Large Paolo Antonini\\
Paolo.Antonini@mathematik.uni-regensburg.de
\\
paolo.anton@gmail.com}

\maketitle
\begin{abstract}We propose the definition of a Manifold with a complex Lie structure at infinity. The important class of ACH manifolds enters into this class.
\end{abstract}

\tableofcontents
















\section{Manifolds with a complex Lie structure at infinity}
Manifolds with a Lie structure at infinity are well known in literature \cite{Ammann,Ammann2,Ammann3}.  Remind the definition. Let $X$ be a smooth non compact manifold together with a compactification $X\hookrightarrow \overline{X}$ where $\overline{X}$ is a compact manifold with corners\footnote{with embedded hyperfaces i.e the definition requires that every boundary hypersurface has a smooth defining function }. A Lie structure at infinity on $X$ is the datum of a Lie subalgebra $\mathcal{V}$ of the Lie algebra of vector fields on $\overline{X}$ subjected to two restrictions
\begin{enumerate}
\item every vector field in $\mathcal{V}$ must be tangent to each boundary hyperface of $\overline{X}$.
\item $\mathcal{V}$ must be a finitely generated $C^{\infty}(\overline{X})$--module this meaning that exists a fixed number $k$ such that around each point $x\in \overline{X}$ we have for every $V\in \mathcal{V}$,
$$\varphi(V-\sum_{i_1}^k \varphi_k V_k)=0$$
 where $\varphi$  is a function with $\varphi=1$ in the neighborhood, the vector fields $V_1,...,V_n$ belong to $\mathcal{V}$ and the coefficients $\varphi_j$ are smooth functions with univoquely determined germ at $x$. 
\end{enumerate} By The Serre--Swann equivalence there must be a Lie Algebroid over $\overline{X}$ i.e. a smooth vector bundle $A\longrightarrow \overline{X}$ with a Lie structure on the space of sections $\Gamma(A)$ and a vector bundle map $\rho:A\longrightarrow \overline{X}$ such that the extended map on sections is a morphism of Lie algebras and satisfies 
\begin{enumerate}
\item $\rho(\Gamma(A))={A}$
\item $[X,fY]=f[X,Y]+(\rho(X)f)Y$ for all $X,Y\in \Gamma(A)$.
\end{enumerate}
In particular we can define manifolds with a Lie structure at infinity as manifolds $X$ with a Lie algebroid over a compactification $\overline{X}$ with the image of $\rho$ contained in the space of boundary vector fields (these are called boundary Lie algebroids). Notice that the vector bundle $A$ can be "physically reconstructed" in fact its fiber $A_x$ is naturally the quotient $\mathcal{V}/\mathcal{V}_x$ where 
$$\mathcal{V}_x:=\Big{\{}V\in \mathcal{V}:V=\sum_{\textrm{finite}} \varphi_jV_j,\,V_j\in \mathcal{V}, \,\varphi_j\in C^{\infty}(\overline{X}),\,\varphi_j(x)=0\Big{\}}.$$ In particular since over the interior $X$ there are no restrictions $\rho:A_{|X}\longrightarrow TX$ is an isomorphism. In mostly of the applications this map degenerates over the boundary. One example for all is the Melrose $b$--geometry \cite{melrose} where one takes as $\overline{X}$ a manifold with boundary and $\mathcal{V}$ is the space of all vector fields that are tangent to the boundary. Here $A={}^bT\overline{X}$ the $b$--tangent bundle. In fact all of these ideas are a formalization of long program of Melrose.

In this section we aim to take into account complex Lie algebroids i.e. complex vector bundles with a structure of a complex Lie algebra on the space of sections and the anchor mapping ($\mathbb{C}$--linear, of coarse) with values on the complexified tangent space 
$T_{\mathbb{C}}\overline{X}=T\overline{X}\otimes \mathbb{C}.$
\begin{dfn}
A manifold with a complex Lie structure at infinity is a triple $(X,\overline{X},A)$ where $X\hookrightarrow \overline{X}$ is a compactification with a manifold with corners and $A\longrightarrow \overline{X}$ is a complex Lie algebroid with the $\mathbb{C}$--linear anchor mapping $\rho:A\longrightarrow T_{\mathbb{C}}\overline{X}$ with values on the space of complex vector fields tangent to each boundary hypersurface.
\end{dfn}
Note that over the interior the algebroid $A$ reduces to the complexified tangent bundle so a hermitian metric along the fibers of $A$ restricts to a hermitian metric on $X$. We shall call the corresponding object a {\bf{hermitian manifold with a complex Lie structure at infinity}} or a hermitian Lie manifold.
\section{ACH manifolds}
The achronim ACH stands for asymptotically complex
hyperbolic manifold. This is an important class of non--compact Riemannian manifolds 
 and are strictly related to some solutions of the Einstein equation
 \cite{Biq,Biq2}
  and CR geometry \cite{Biq3}. We are going to remind the definition. Let $\overline{X}$ be a compact manifold of even dimension $m = 2n$ with boundary
$Y$ . We will denote by $X$ the interior of $X$, and choose a defining function $u$
of $Y$ , that is a function on $\overline{X}$, positive on $X$ and vanishing to first order on
$Y = \partial \overline X$.
The notion of ACH metric on $X$ is related to the data of a strictly pseudoconvex
CR structure on $Y$, that is an almost complex structure $J$ on a
contact distribution of $Y$, such that $\gamma(\cdot, \cdot) = d\eta(\cdot, J\cdot)$ is a positive Hermitian
metric on the contact distribution (here we have chosen a contact form $\eta$).
Identify a collar neighborhood of $Y$ in $X$ with $ [0, T)\times Y$ , with coordinate
$u$ on the first factor. A Riemannian metric $g$ is defined to be an ACH metric
on $X$ if there exists a CR structure $J$ on $Y$, such that near $Y$
\begin{equation}
g\sim \dfrac{du^2+\eta^2}{u^2}+\dfrac{\gamma}{u}.
\end{equation} The asymtotic $\sim$ should be intended in the sense that the difference between $g$ and the model metric $g_0=\dfrac{du^2+\eta^2}{u^2}+\dfrac{\gamma}{u}$ is a symmetric $2$--tensor $\kappa$ with $|\kappa|=O({u}^{\delta/2})$, $0<\delta \leq 1$. One also requires that each $g_0$--covariant derivative of $\kappa$ must satisfy $|\nabla^m \kappa|=O({u}^\delta/2)$. The complex structure on the Levi distribution $H$ on the boundary is called {\bf{the conformal infinity}} of $g$. Hereafter we shall take the normalization $$\delta=1.$$ This choice is motivated by applications to the ACH Einstein manifolds where well known normalization results show its naturality \cite{Biq}.

\subsection{The square root of a manifold with boundary}In order to show that ACH manifolds are complex Lie manifolds we need a construction of Melrose, Epstein and Mendoza \cite{Me}. So let $\overline{X}$ be a manifold with boundary with boundary defining function $u$. 
Let us extend the ring of smooth functions $C^{\infty}(\overline{X})$ by  adjoining the function $\sqrt{u}$. Denote this new ring $C^{\infty}(\overline{X}_{1/2})$ In local coordinates a function is in this new structure if it can be expressed as a $C^{\infty}$ function of 
$u^{1/2},y_1,...,y_n$ i.e. it is $C^{\infty}$ in the interior and has an expansion at $\partial{\overline{X}}$ of the form 
$$f(u,x)\sim \sum_{j=0}^{\infty}u^{j/2}a_j(x)$$ with coefficients 
$a_j(x)$ smooth in the usual sense. The difference 
$f-\sum_{j=0}^{N}u^{j/2}a_j(x)$ becomes increasingly smooth with $N$. In this way $f$ is determined by the asymtotic series up to a function with all the derivatives that vanish at the boundary. Since the ring is independent from the choice of the defining function and invariant under diffeomorphisms of $\overline{X}$ the manifold 
$\overline{X}$ equipped with 
$C^{\infty}(\overline{X}_{1/2})$ is a manifold with boundary globally diffeomorphic to $\overline{X}$.
\begin{dfn}The square root of $\overline{X}$ is the manifold $\overline{X}$ equipped with the ring of functions $C^{\infty}(\overline{X}_{1/2})$. We denote it $\overline{X}_{1/2}$
\end{dfn}
Notice the natural mapping $\iota_{1/2}:\overline{X}\longrightarrow \overline{X}_{1/2}$ descending from the inclusion 
 $C^{\infty}(\overline{X})\hookrightarrow C^{\infty}(\overline{X}_{1/2})
$ is not a $C^{\infty}$ isomorphism since it cannot be smoothly inverted. Note also the important fact that the interiors and boundaries of $\overline{X}$ and $\overline{X}_{1/2}$ are canonically diffeomorphic. The change is the way the boundary is attached.
\subsection{The natural complex Lie algebroid associated to an ACH manifold} Let $X$ be an orientable $2n$--dimensional ACH manifold with compactification $\overline{X}$, define $Y:=\partial \overline{X}$ and remember for further use it is canonically diffeomorphic to the boundary of $\overline{X}_{1/2}$. So $Y$ is a CR $(2n-1)$-- manifold with contact form $\eta$ (we keep all the notations above).
Let $H=\operatorname{Ker}\eta$ the Levi distribution with choosen complex structure $J:H\longrightarrow H$. Extend $J$ to a complex linear endomorphism $J:T_{\mathbb{C}}Y\longrightarrow T_{\mathbb{C}}Y$ with $J^2=-1$. Define the complex subundle $T_{1,0}$ of $T_{\mathbb{C}}Y$ as the bundle of the $i$--eigenvectors. Notice that directly from the definition on the CR structure it is closed under the complex bracket of vector fields; for this reason the complex vector space
$$\mathcal{V}_{1,0}:=\{V\in \Gamma(\overline{X}_{1/2},T\overline{X}_{1/2}):V_{|Y}\in \Gamma(T_{1,0})\}$$ is a complex Lie algebra. It is also a finitely generated projective module. To see this, around a point $x\in Y$ let $U_1,...,U_r$, $r=2(n-1)$ span $H$ and let $T\in \Gamma(Y,TY)$ be the Reeb vector field, univoquely determined by the conditions $\gamma(T)=1$ and 
$d\gamma(\cdot,T)=0$. Then it is easy to see that the following is a local basis of $\mathcal{V}_{1,0}$ over 
$C^{\infty}(\overline{X}_{1/2},\mathbb{C})$:
\begin{equation}\label{1}\sqrt{u}\partial_u,\,\,U_1-iJU_1,\,\,...,\,\,U_r-iJU_r,\,\,\sqrt{u}T\end{equation} where $u$ is a boundary defining function. Now let $$\widetilde{\mathcal{V}}_{\textrm{ACH}}:=\sqrt{u}\mathcal{V}_{1,0}$$ the submodule defined by the multiplication of every vector field by the smooth function $\sqrt{u}$. A local basis corresponding to \eqref{1} is 
\begin{equation}
\label{2}
u\partial_u,\,\,\sqrt{u}[U_1-iJU_1],\,\,...,\,\,\sqrt{u}[U_r-iJU_r],\,\,uT.
\end{equation}
 Let $A\longrightarrow \overline{X}_{1/2}$ the corresponding Lie algebroid. The following result is immediate
\begin{thm}
Every ACH metric on $X$ 
 extends to a smooth hermitian metric on $A$. In particular an ACH manifold is a manifold with a Complex Lie structure at infinity.
\end{thm}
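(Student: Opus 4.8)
The plan is to make the asymptotic expansion in the ACH condition match the algebroid structure on $A$ fibrewise. First I would compute the model metric $g_0$ in the local frame \eqref{2}. By definition $g_0 = u^{-2}(du^2+\eta^2)+u^{-1}\gamma$; writing $du = 2\sqrt{u}\,d(\sqrt{u})$ so that $u^{-2}du^2 = 4 u^{-1}d(\sqrt u)^2$, and noting that the dual coframe to \eqref{2} has the shape $\{u^{-1}du,\ u^{-1/2}\eta^{1,0}\text{-type covectors},\ u^{-1}\eta\}$, one checks directly that $g_0$ pairs the basis vectors \eqref{2} into a matrix whose entries are smooth functions on $\overline X_{1/2}$ and which is positive-definite and nondegenerate up to and including $u=0$. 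Concretely, on $u\partial_u$ the metric $g_0$ gives a bounded nonzero value because $|u\partial_u|^2_{g_0}=u^{-2}\,du(u\partial_u)^2=1$; on $\sqrt u\,[U_j-iJU_j]$ it gives a multiple of $\gamma(U_j,U_j)$ (the $u^{-1}$ in $\gamma/u$ cancelling the $u$ from $(\sqrt u)^2$), and on $uT$ it gives $u^{-2}\eta(uT)^2 = 1$. So the model metric already extends to a smooth Hermitian metric on $A$.

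Next I would handle the error term. The ACH hypothesis with the normalization $\delta=1$ says $g=g_0+\kappa$ with $|\kappa|_{g_0}=O(u^{1/2})$ and $|\nabla^m_{g_0}\kappa|_{g_0}=O(u^{1/2})$ for all $m$. The key point is to re-express these $g_0$-norm bounds as statements about the components of $\kappa$ in the frame \eqref{2}. Since that frame is $g_0$-bounded and $g_0$-nondegenerate at the boundary, $|\kappa|_{g_0}=O(u^{1/2})$ is equivalent to each component $\kappa(e_a,e_b)$ being $O(u^{1/2})$ in the ordinary sense, where $e_1,\dots,e_{2n}$ is the frame \eqref{2}. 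A function that is $O(u^{1/2})$ with all derivatives also $O(u^{1/2})$ — more precisely, the covariant-derivative control translates into control of ordinary derivatives of the components — is exactly a function of the form $\sqrt u \cdot(\text{smooth in }\sqrt u)$, hence an element of $C^\infty(\overline X_{1/2})$ that vanishes at $u=0$. (This is the same phenomenon used in the construction of $\overline X_{1/2}$: a symbol with a half-integer expansion, and here we in fact get the simplest case, a single leading $\sqrt u$ term with smooth remainder.) Therefore the components of $\kappa$ are smooth functions on $\overline X_{1/2}$, so $\kappa$ itself, viewed as a section of $A^*\otimes_{\mathbb C} \overline{A}^*$ (extending it $\mathbb C$-bilinearly/Hermitian-symmetrically, consistently with the interior identification $\rho\colon A_{|X}\xrightarrow{\ \sim\ }T_{\mathbb C}X$), extends smoothly to $\overline X_{1/2}$ and vanishes on $Y$.

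Adding the two pieces, $g=g_0+\kappa$ extends to a smooth Hermitian form on $A$ over $\overline X_{1/2}$; it is positive-definite in a neighbourhood of $Y$ because $g_0$ is positive-definite there and $\kappa\to 0$ at $Y$, and it is positive-definite over $X$ because it equals the Riemannian metric $g$ under $\rho$. A standard partition-of-unity argument glues this boundary-collar Hermitian metric to any smooth Hermitian metric on $A$ over the compact piece away from $Y$, yielding a global smooth Hermitian metric on $A$ restricting to $g$ on $X$. Finally, the second sentence of the statement is just the definition together with the already-established facts: the frame \eqref{2} shows $\widetilde{\mathcal V}_{\mathrm{ACH}}$ (equivalently $\Gamma(A)$) is a finitely generated projective $C^\infty(\overline X_{1/2},\mathbb C)$-module closed under the complex bracket, $T_{1,0}\subset T_{\mathbb C}Y$ being involutive gives the Lie-algebra property, and the anchor $\rho$ sends each basis vector in \eqref{2} to a complex vector field tangent to $Y$ (indeed $u\partial_u$, $uT$, and $\sqrt u[U_j-iJU_j]$ all vanish or are tangent along $u=0$), so $(X,\overline X_{1/2},A)$ is a manifold with a complex Lie structure at infinity, and with the metric just constructed it is a hermitian one.

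The main obstacle I expect is the translation in the second paragraph: showing that the $g_0$-norm decay conditions on $\kappa$ and its $g_0$-covariant derivatives are genuinely equivalent to the frame-components of $\kappa$ lying in $\sqrt u\,C^\infty(\overline X_{1/2})$. This requires knowing that the Levi-Civita connection of $g_0$, written in the frame \eqref{2}, has connection coefficients that are smooth on $\overline X_{1/2}$ (so that iterated covariant derivatives control iterated ordinary derivatives of components, and conversely), which in turn rests on the explicit form of $g_0$ computed in the first paragraph; one also needs the elementary but slightly delicate fact that a smooth interior function whose all derivatives are $O(u^{1/2})$ in $\sqrt u$-adapted coordinates actually lies in $\sqrt u\,C^\infty(\overline X_{1/2})$. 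Everything else is bookkeeping with the frames \eqref{1}–\eqref{2} and the definitions.
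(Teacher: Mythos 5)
Your proposal is correct and follows essentially the same route as the paper, whose entire proof is the instruction to write the matrix of the difference $\kappa$ in the frame \eqref{2} and observe that this gives the right asymptotic. You simply carry out in detail what the paper leaves implicit: the computation of $g_0$ in that frame, the translation of the decay conditions on $\kappa$ into statements about its frame components as functions on $\overline{X}_{1/2}$, and the verification of the complex Lie structure, all consistent with the paper's argument.
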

\begin{proof}
Just write the matrix of the difference $\kappa$ on a frame of the form
\eqref{2}. This gives the right asymptotic.
\end{proof}

\end{document}